\documentclass[11pt]{article}
\usepackage[margin=1.25in]{geometry}
\usepackage{amsmath,amssymb,amsthm}
\usepackage{tikz}
\usetikzlibrary{arrows.meta,decorations.markings,calc,angles,quotes}
\usepackage{subcaption}

\newtheorem{theorem}{Theorem}
\newtheorem{proposition}{Proposition}
\theoremstyle{remark}
\newtheorem*{remark}{Remark}

\DeclareMathOperator{\Area}{Area}
\newcommand{\dang}{\measuredangle}

\title{Signed Quantities and Quadrilaterals:\\ Geometry without Cases}
\author{George-Rubin Thomas\\[2pt]
\small Founder, MathCamp, MathPath, Epsilon Camp, and ProofZoom\\
\small \texttt{thomas@proofzoom.org}}
\date{}

\begin{document}
\maketitle

\begin{abstract}
Ordinary angles and unsigned areas discard orientation, and much of the
case analysis in plane geometry is the price of that loss. Signed
quantities (directed angles, oriented areas and directed distances) keep
the signs and so let one argument serve for every configuration. We
illustrate this with results that hold uniformly for convex, concave and
crossed quadrilaterals: the diagonal area formula, an identity expressing
one diagonal in terms of the sides and the other diagonal, a test for a
pair of parallel sides, a theorem on equidiagonal quadrilaterals (such a
quadrilateral has a pair of equal opposite sides if and only if it has a
pair of parallel opposite sides), and a single ratio of directed distances
that decides whether the vertices lie on an ellipse, only on hyperbolas, or
on a circle.
\end{abstract}

\section{Introduction}

Is the area of a quadrilateral half the product of its diagonals times the
sine of the angle between them? For a convex quadrilateral it is. Harries
\cite{Harries} showed that the same expression $\tfrac12 ef\sin\theta$ also
gives the area of a concave quadrilateral, by a separate computation, and
that for a crossed quadrilateral a third computation turns it into the
\emph{difference} of the areas of the two triangular lobes. Arguments of this
kind, split three ways into convex, concave and crossed cases, are common in
the metric geometry of quadrilaterals. The split rarely reflects the
geometry itself. It comes from the signs lost when we pass to ordinary
angles, unsigned areas and unsigned distances, and it disappears when those
signs are restored.

The idea is old. Johnson~\cite{Johnson} introduced directed angles between
lines in 1917 precisely so that theorems such as Simson's could be stated
and proved once for every figure, instead of for whichever figure happened
to be drawn. Here we apply the same idea to metric results about
quadrilaterals, using three signed quantities, which Section~2 introduces:
directed angles, oriented areas and directed distances. Each result below
holds, with one proof, for convex, concave and crossed quadrilaterals alike.

Section~3 returns to the area formula: with oriented area it takes one
line, and it shows at once why the crossed case gives a difference.
Section~4 expresses one diagonal in terms of the four sides, the other
diagonal and the oriented areas of the two triangles on that diagonal
(Theorem~\ref{thm:diag}). The proof is the classical law-of-cosines argument
for a convex quadrilateral, written with directed angles; with no other
change it becomes valid for every configuration. Applying the identity to
both diagonals gives a simple test for a pair of parallel sides
(Proposition~\ref{prop:trap}) and a relation valid for every quadrilateral
(Proposition~\ref{prop:both}). Section~5 specializes this relation to
quadrilaterals with equal diagonals and proves that such a quadrilateral has
a pair of equal opposite sides if and only if it has a pair of parallel
opposite sides (Theorem~\ref{thm:equi}); for a convex quadrilateral this is
the familiar fact that a trapezoid with equal diagonals is isosceles.
Section~6 turns to the conics through the four vertices. A single ratio $k$
of directed distances decides whether they lie on an ellipse ($k>0$) or
only on hyperbolas ($k<0$), and $k=1$ exactly when they lie on a circle
(Theorem~\ref{thm:conic}).

The tools are elementary: coordinates, determinants, the law of cosines and
the power of a point. We hope that the paper will be useful to teachers and
their students as an illustration of how much a well-chosen sign can save.

\section{Directed angles, oriented areas and directed distances}

\subsection{Directed angles}

An ordinary angle $\angle BAC$ records the separation of two rays but not
their order: $\angle BAC=\angle CAB$. Johnson's directed angle between two
lines $l$, $m$ is the counterclockwise rotation carrying $l$ onto $m$. It is
defined modulo $\pi$, because a half-turn carries a line onto itself. This is
the natural tool for incidence questions such as concyclicity, but it cannot
supply the sines and cosines that metric identities require, since
$\sin(\theta+\pi)=-\sin\theta$.

We therefore use the corresponding angle between vectors. For nonzero
vectors $u$, $v$, let
\[
\dang(u,v)\in\mathbb{R}/2\pi\mathbb{Z}
\]
be the counterclockwise rotation from the direction of $u$ to the direction
of $v$. Reducing it modulo $\pi$ gives Johnson's angle between the
underlying lines. Its sine and cosine are well defined, and
\[
u\cdot v=|u|\,|v|\cos\dang(u,v),\qquad
\det(u,v)=|u|\,|v|\sin\dang(u,v),
\]
where $\det(u,v)=u_1v_2-u_2v_1$. Directed angles obey
\[
\dang(v,u)=-\dang(u,v),\qquad \dang(u,w)=\dang(u,v)+\dang(v,w)
\pmod{2\pi},
\]
with no exceptions for the relative positions of $u$, $v$, $w$. For points
we write $\dang BAC:=\dang(B-A,\,C-A)$. Because cosine is even, the law of
cosines holds verbatim with directed angles:
$BC^2=AB^2+AC^2-2\,AB\cdot AC\cos\dang BAC$.

\subsection{Oriented area}

For a polygon with vertices $A_i=(x_i,y_i)$ listed in order, with
$A_{n+1}=A_1$, the oriented area is given by the shoelace formula
\[
[A_1\cdots A_n]=\frac12\sum_{i=1}^n \det(A_i,A_{i+1})
=\frac12\sum_{i=1}^n (x_iy_{i+1}-x_{i+1}y_i).
\]
Reversing the order of the vertices reverses the sign. More precisely, the
polygon divides the plane into regions, and $[A_1\cdots A_n]$ is the sum of
their areas, each weighted by the number of times the polygon winds
counterclockwise around it. For a simple polygon this is $\pm$ the ordinary
area, with $+$ for counterclockwise traversal. For a crossed quadrilateral
the two lobes are wound in opposite senses and enter with opposite signs.

For a triangle the definition reduces to
\[
[ABC]=\tfrac12\det(B-A,\,C-A)=\tfrac12\,AB\cdot AC\,\sin\dang BAC .
\]
Thus $|[ABC]|$ is the ordinary area, and the sign of $[ABC]$ records the
orientation of the ordered triple $A,B,C$.

\subsection{Directed distances}

Two kinds of directed distance appear in Section~6. On a line with a chosen
direction, the directed length $\overline{XY}$ is $\pm XY$, positive when
$Y-X$ points along the chosen direction. For points $O,X,Y$ on one line the
product
\[
\overline{OX}\cdot\overline{OY}=(X-O)\cdot(Y-O)
\]
does not depend on the chosen direction, and it is negative exactly when $O$
lies between $X$ and $Y$. This is the setting of the power of a point: if
two lines through $O$ meet a circle at $X,Y$ and at $Z,W$, then
$\overline{OX}\cdot\overline{OY}=\overline{OZ}\cdot\overline{OW}$, whether
$O$ is inside or outside the circle; conversely, if $X,Y$ and $Z,W$ lie on
two different lines through $O$, all distinct from $O$, and this equation
holds, then $X,Y,Z,W$ are concyclic.

The directed distance from a point $X$ to the line $PQ$ is
\[
h_X(PQ)=\frac{\det(Q-P,\,X-P)}{PQ}=\frac{2[PQX]}{PQ},
\]
so that an oriented area is half a base times a directed height:
$[PQX]=\tfrac12\,PQ\cdot h_X(PQ)$. Reversing the direction of $PQ$ changes
the sign of $h_X(PQ)$, so a product $h_X(PQ)\,h_Y(PQ)$ does not depend on it;
the product is positive when $X$ and $Y$ lie on the same side of the line and
negative when they lie on opposite sides.

\section{Area from the diagonals}

As recalled in the Introduction, Harries~\cite{Harries} obtains the formula
$\tfrac12 ef\sin\theta$ for convex, concave and crossed quadrilaterals by
three separate computations. Oriented area explains why the crossed answer
is a difference, and it gives all three cases in a single line.

Let $A,B,C,D$ be listed in their order of traversal. Expanding,
\begin{align*}
\det(C-A,\,D-B)&=\det(C,D)-\det(C,B)-\det(A,D)+\det(A,B)\\
&=\det(A,B)+\det(B,C)+\det(C,D)+\det(D,A),
\end{align*}
so the shoelace formula gives
\[
[ABCD]=\tfrac12\det(C-A,\,D-B).
\]
Let $e=AC$, $f=BD$ and $\theta=\dang(C-A,\,D-B)$, the directed angle from
the vector $C-A$ to the vector $D-B$. Then
\[
[ABCD]=\tfrac12\,ef\sin\theta .
\]
In particular, the sign of $\sin\theta$ is the orientation of the
traversal $ABCD$.

For a convex or concave quadrilateral, $\Area(ABCD)=|[ABCD]|=\tfrac12
ef\,|\sin\theta|$. For a crossed quadrilateral the two lobes are traversed
in opposite senses, so $|[ABCD]|$ is the difference of their areas, exactly
as Harries found. Their total area is the sum, which the diagonal formula
does not give.

\section{A diagonal identity without cases}

\subsection{The two triangles on a diagonal}

Let $ABCD$ be a quadrilateral with $A\ne C$, and put
\[
\Delta_1^e=[ABC],\qquad \Delta_2^e=[ADC].
\]
Both ordered triangles traverse their common side from $C$ to $A$. Hence
they have opposite orientations if $B$ and $D$ lie on opposite sides of the
line $AC$, and the same orientation if $B$ and $D$ lie on the same side:
\[
\Delta_1^e\Delta_2^e<0\ \text{for opposite-side gluing},\qquad
\Delta_1^e\Delta_2^e>0\ \text{for same-side gluing}.
\]
In a convex quadrilateral both diagonals give opposite-side gluing; in a
crossed quadrilateral both give same-side gluing. In a concave
quadrilateral the gluing is opposite-side for the diagonal through the
reflex vertex and same-side for the other diagonal (Figure~\ref{fig:glue}).

\begin{figure}[htbp]
\centering
\tikzset{
  pt/.style={circle,fill,inner sep=1.1pt},
  mid/.style={postaction={decorate},decoration={markings,
    mark=at position 0.55 with {\arrow{Stealth[length=5pt]}}}},
  diag/.style={dashed,mid},
}
\begin{subfigure}{0.31\textwidth}\centering
\begin{tikzpicture}[scale=1.05]
  \coordinate (A) at (0,0); \coordinate (B) at (1.3,0.75);
  \coordinate (C) at (0,2.1); \coordinate (D) at (-1.35,0.8);
  \draw[mid] (A)--(B); \draw[mid] (B)--(C);
  \draw[mid] (A)--(D); \draw[mid] (D)--(C);
  \draw[diag] (C)--(A);
  \draw[-{Stealth[length=3.5pt]}] ($(A)!0.42!(B)$) arc[start angle=30,end angle=90,radius=0.6];
  \node at (0.28,0.75) {\scriptsize$\alpha$};
  \draw[-{Stealth[length=3.5pt]}] ($(A)!0.38!(D)$) arc[start angle=149,end angle=90,radius=0.6];
  \node at (-0.28,0.75) {\scriptsize$\delta$};
  \draw[-{Stealth[length=3.5pt]}] ($(C)!0.32!(B)$) arc[start angle=-46,end angle=-90,radius=0.6];
  \node at (0.24,1.35) {\scriptsize$\beta$};
  \draw[-{Stealth[length=3.5pt]}] ($(C)!0.32!(D)$) arc[start angle=-135,end angle=-90,radius=0.6];
  \node at (-0.24,1.35) {\scriptsize$\gamma$};
  \foreach \P/\pos in {A/below,B/right,C/above,D/left} \node[pt,label=\pos:$\P$] at (\P) {};
\end{tikzpicture}
\caption{Convex}
\end{subfigure}\hfill
\begin{subfigure}{0.31\textwidth}\centering
\begin{tikzpicture}[scale=1.05]
  \coordinate (A) at (0,0.7); \coordinate (B) at (1.35,-0.2);
  \coordinate (C) at (0,2.4); \coordinate (D) at (-1.35,-0.2);
  \draw[mid] (A)--(B); \draw[mid] (B)--(C);
  \draw[mid] (A)--(D); \draw[mid] (D)--(C);
  \draw[diag] (C)--(A);
  \foreach \P/\pos in {A/below,B/right,C/above,D/left} \node[pt,label=\pos:$\P$] at (\P) {};
\end{tikzpicture}
\caption{Concave}
\end{subfigure}\hfill
\begin{subfigure}{0.31\textwidth}\centering
\begin{tikzpicture}[scale=1.05]
  \coordinate (A) at (0,0); \coordinate (B) at (1.4,1.6);
  \coordinate (C) at (0,2.2); \coordinate (D) at (1.4,0.35);
  \draw[mid] (A)--(B); \draw[mid] (B)--(C);
  \draw[mid] (A)--(D); \draw[mid] (D)--(C);
  \draw[diag] (C)--(A);
  \foreach \P/\pos in {A/left,B/right,C/left,D/right} \node[pt,label=\pos:$\P$] at (\P) {};
\end{tikzpicture}
\caption{Crossed}
\end{subfigure}
\caption{The ordered triangles $ABC$ and $ADC$ glued along the diagonal
$AC$. Arrows show the traversals $A\to B\to C\to A$ and $A\to D\to C\to A$.
In (a) and (b), $B$ and $D$ lie on opposite sides of $AC$ and the triangles
have opposite orientations; in (c) they lie on the same side and the
triangles have the same orientation. The arcs in (a) show the directed
angles used in the proof of Theorem~\ref{thm:diag}.}
\label{fig:glue}
\end{figure}

\subsection{The identity}

\begin{theorem}[Quadrilateral diagonal identity]\label{thm:diag}
Let $ABCD$ be a quadrilateral with sides $AB=a$, $BC=b$, $CD=c$, $DA=d$ and
diagonals $AC=e$, $BD=f$, where $A\ne C$. Then
\begin{equation}\label{eq:diag}
2f^2=a^2+b^2+c^2+d^2-e^2+\frac{(a^2-b^2)(c^2-d^2)}{e^2}
-\frac{16\,\Delta_1^e\Delta_2^e}{e^2}.
\end{equation}
\end{theorem}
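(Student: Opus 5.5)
The plan is the classical law-of-cosines argument at the vertex $A$, written with the directed angles of Section~2, so that the angle at $A$ between the two sides splits as a \emph{difference} of directed angles to the diagonal $AC$ in every configuration. Write $\alpha=\dang BAC$ and $\delta=\dang DAC$. By additivity of directed angles, $\dang BAD=\dang BAC+\dang CAD=\alpha-\delta \pmod{2\pi}$, with no dependence on the shape of $ABCD$. The law of cosines in triangle $ABD$ gives
\[
f^2=a^2+d^2-2ad\cos(\alpha-\delta)=a^2+d^2-2ad(\cos\alpha\cos\delta+\sin\alpha\sin\delta).
\]

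Next, each of the four trigonometric factors comes from one of the triangles on the diagonal $AC$. The law of cosines in $ABC$ and in $ADC$ gives
\[
ae\cos\alpha=\tfrac12(a^2+e^2-b^2),\qquad de\cos\delta=\tfrac12(d^2+e^2-c^2).
\]
The triangle formula of Section~2.2 gives
\[
ae\sin\alpha=2[ABC]=2\Delta_1^e,\qquad de\sin\delta=2[ADC]=2\Delta_2^e.
\]
Multiplying these in pairs and dividing by $e^2$ (legitimate since $A\ne C$) yields
\[
2ad\cos(\alpha-\delta)=\frac{(a^2+e^2-b^2)(d^2+e^2-c^2)}{2e^2}+\frac{8\Delta_1^e\Delta_2^e}{e^2}.
\]
The sign of $\Delta_1^e\Delta_2^e$ automatically records same-side versus opposite-side gluing, which is exactly where the convex, concave and crossed cases would otherwise diverge.

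The final step is algebra. Double $f^2$ and expand
\[
(a^2-b^2+e^2)(d^2-c^2+e^2)=(a^2-b^2)(d^2-c^2)+e^2\bigl[(a^2-b^2)+(d^2-c^2)\bigr]+e^4 .
\]
After dividing by $e^2$ and collecting terms, one obtains $a^2+b^2+c^2+d^2-e^2+(a^2-b^2)(c^2-d^2)/e^2-16\Delta_1^e\Delta_2^e/e^2$, which is \eqref{eq:diag}.

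The only real obstacle is degeneracy. If $B=A$ or $D=A$, the angles $\alpha,\delta$ are undefined. I would avoid them altogether by working with the vectors $u=B-A$, $v=D-A$, $w=C-A$ and the identity
\[
(u\cdot w)(v\cdot w)+\det(u,w)\det(v,w)=|w|^2\,(u\cdot v),
\]
which is a direct coordinate check (it is the vector form of $\cos(\alpha-\delta)=\cos\alpha\cos\delta+\sin\alpha\sin\delta$). Then $f^2=|u-v|^2=a^2+d^2-2u\cdot v$. The dot products $u\cdot w,\ v\cdot w$ are given by polarization, $u\cdot w=\tfrac12(a^2+e^2-b^2)$ and $v\cdot w=\tfrac12(d^2+e^2-c^2)$. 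The determinants are given by $\det(u,w)=2\Delta_1^e$ and $\det(v,w)=2\Delta_2^e$. This reproduces the same computation with only the hypothesis $A\ne C$.
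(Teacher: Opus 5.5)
Your proof is correct and follows the paper's argument: the law of cosines with directed angles and $\dang BAD=\alpha-\delta$, cosines from the two triangles on $AC$, and sines from the oriented areas $\Delta_1^e,\Delta_2^e$. There are two minor differences. You work at vertex $A$ alone, whereas the paper also computes at $C$ and adds the two equations; that only symmetrizes, since each equation by itself already gives \eqref{eq:diag}. You also handle coincident vertices with the exact identity $(u\cdot w)(v\cdot w)+\det(u,w)\det(v,w)=|w|^2\,u\cdot v$ rather than the paper's polynomial-continuity argument, so your vector version is a complete proof on its own under the sole hypothesis $A\ne C$.
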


\begin{proof}
Suppose first that $B$ and $D$ are distinct from $A$ and $C$, so that
$a,b,c,d>0$. Measure every angle from the diagonal: put
\[
\alpha=\dang BAC,\quad \delta=\dang DAC,\quad
\beta=\dang BCA,\quad \gamma=\dang DCA .
\]
By the addition rule, $\dang BAD=\dang BAC+\dang CAD=\alpha-\delta$, and
likewise $\dang BCD=\beta-\gamma$. The law of cosines in triangles $ABD$ and
$CBD$ gives
\begin{equation}\label{eq:cos}
f^2=a^2+d^2-2ad\cos(\alpha-\delta),\qquad
f^2=b^2+c^2-2bc\cos(\beta-\gamma).
\end{equation}
In the triangles $ABC$ and $ADC$ on the diagonal,
\begin{align*}
\cos\alpha&=\frac{a^2+e^2-b^2}{2ae}, & \cos\delta&=\frac{d^2+e^2-c^2}{2de},\\
\cos\beta&=\frac{b^2+e^2-a^2}{2be}, & \cos\gamma&=\frac{c^2+e^2-d^2}{2ce},
\end{align*}
while the sines are given by the oriented areas:
\[
\Delta_1^e=[ABC]=\tfrac12 ae\sin\alpha,\qquad
\Delta_2^e=[ADC]=\tfrac12 de\sin\delta,
\]
and, reading the same triangles from $C$,
$\Delta_1^e=[CAB]=-\tfrac12 be\sin\beta$ and
$\Delta_2^e=[CAD]=-\tfrac12 ce\sin\gamma$. Hence
\[
\sin\alpha\sin\delta=\frac{4\Delta_1^e\Delta_2^e}{ade^2},\qquad
\sin\beta\sin\gamma=\frac{4\Delta_1^e\Delta_2^e}{bce^2}.
\]
Expanding the cosines of differences,
\begin{align*}
2ad\cos(\alpha-\delta)&=\frac{(a^2+e^2-b^2)(d^2+e^2-c^2)}{2e^2}
+\frac{8\Delta_1^e\Delta_2^e}{e^2},\\
2bc\cos(\beta-\gamma)&=\frac{(b^2+e^2-a^2)(c^2+e^2-d^2)}{2e^2}
+\frac{8\Delta_1^e\Delta_2^e}{e^2}.
\end{align*}
Writing $P=a^2-b^2$ and $Q=d^2-c^2$, the two products are $(e^2+P)(e^2+Q)$
and $(e^2-P)(e^2-Q)$, whose sum is $2e^4+2PQ$. Adding the two equations
\eqref{eq:cos} therefore gives
\[
2f^2=a^2+b^2+c^2+d^2-e^2-\frac{(a^2-b^2)(d^2-c^2)}{e^2}
-\frac{16\Delta_1^e\Delta_2^e}{e^2},
\]
which is \eqref{eq:diag}.

Finally, after multiplication by $e^2$ both sides of \eqref{eq:diag} are
polynomials in the coordinates of $A,B,C,D$. They agree whenever $B$ and $D$
avoid $A$ and $C$, hence, by continuity, for all positions of $B$ and $D$.
\end{proof}

\begin{remark}
For a convex quadrilateral this is the classical argument, with ordinary
angles: $\angle BAD=\angle BAC+\angle CAD$, every sine is positive, and
$\sin\angle BAC\,\sin\angle CAD$ is a positive multiple of the product of
the two triangle areas. For a crossed quadrilateral the classical argument
must instead write $\angle BAD=|\angle BAC-\angle CAD|$, and the sign of
the area term changes; the concave case needs further attention, since the
two diagonals behave differently. With directed angles ($\alpha$ and
$\delta$), the relation $\dang BAD=\alpha-\delta$ holds in every
configuration (and likewise $\dang BCD=\beta-\gamma$), and the sign of
$\Delta_1^e\Delta_2^e$ records the configuration automatically. The proof
needs no other change.
\end{remark}

\subsection{The radical form and its two roots}

The usual formula for the area of a triangle in terms of its sides,
$16\Delta^2=4a^2e^2-(a^2+e^2-b^2)^2$, turns \eqref{eq:diag} into
\begin{multline}\label{eq:rad}
2f^2=a^2+b^2+c^2+d^2-e^2+\frac{(a^2-b^2)(c^2-d^2)}{e^2}\\
\pm\frac{1}{e^2}\sqrt{\bigl[4a^2e^2-(a^2+e^2-b^2)^2\bigr]
\bigl[4d^2e^2-(d^2+e^2-c^2)^2\bigr]},
\end{multline}
with the plus sign for opposite-side gluing and the minus sign for
same-side gluing. For convex quadrilaterals, where the gluing is always
opposite-side, the formula with the plus sign was obtained by Sedrakyan and
Mozayeni~\cite{SM}; \eqref{eq:diag} extends it to concave and crossed
quadrilaterals. The radical form displays the dependence on the five lengths
$a,b,c,d,e$; the oriented-area form \eqref{eq:diag} carries the sign.

The two signs have a simple meaning. Reflecting $D$ in the line $AC$ to a
point $D'$ leaves $a,b,c,d,e$ unchanged but reverses the sign of
$\Delta_2^e$. So the two values (of $f^2$) in \eqref{eq:rad} are $BD^2$ and
$BD'^2$: the (remaining) five lengths fix the two triangles on $AC$ but not
the side of $AC$ on which they are glued. Equivalently, the six distances
among four coplanar points satisfy a single polynomial relation, the
vanishing of their Cayley--Menger determinant~\cite{Blumenthal}. As an equation in $f^2$
that relation is quadratic, and its two roots are the two values in
\eqref{eq:rad}. Identity~\eqref{eq:diag} says which root is the actual
diagonal: the sign of $\Delta_1^e\Delta_2^e$ selects it.

\begin{remark}
Write $L=(a^2+b^2+c^2+d^2-e^2)\,e^2+(a^2-b^2)(c^2-d^2)$, so that
\eqref{eq:diag} reads $2e^2f^2-L=-16\,\Delta_1^e\Delta_2^e$. Expanding the
Cayley--Menger determinant of $A,B,C,D$,
\[
\mathrm{CM}=\det\begin{pmatrix}
0&1&1&1&1\\
1&0&a^2&e^2&d^2\\
1&a^2&0&b^2&f^2\\
1&e^2&b^2&0&c^2\\
1&d^2&f^2&c^2&0
\end{pmatrix},
\]
one finds
\[
\mathrm{CM}=-\frac{1}{2e^2}\Bigl[(2e^2f^2-L)^2-256\,(\Delta_1^e)^2(\Delta_2^e)^2\Bigr].
\]
The squares $(\Delta_1^e)^2$ and $(\Delta_2^e)^2$ depend only on lengths,
so the planar relation $\mathrm{CM}=0$ determines $2e^2f^2-L$ only up to
sign. Identity \eqref{eq:diag} is its signed square root: orientation
supplies the sign, much as the shoelace formula supplies the sign that
Heron's formula loses.

Conversely, the factorization yields a second proof of \eqref{eq:diag}. Put
$G=2e^2f^2-L+16\Delta_1^e\Delta_2^e$ and $H=2e^2f^2-L-16\Delta_1^e\Delta_2^e$.
Both are polynomials in the coordinates of $A,B,C,D$, and
$\mathrm{CM}=-GH/(2e^2)$ vanishes for every plane configuration, so $GH$ is
the zero polynomial and one of $G$, $H$ vanishes identically. For the unit
square $e^2=f^2=2$, $L=4$ and $\Delta_1^e\Delta_2^e=-\tfrac14$, so $G=0$ and
$H=8$; hence $G\equiv0$, which is \eqref{eq:diag}. This proof is short once
the determinant has been expanded, but the sign comes from an algebraic
argument and a single example; in the proof of Theorem~\ref{thm:diag} it
comes from the geometry.

The picture becomes concrete if the quadrilateral is folded along $AC$, so
that the triangles $ABC$ and $ADC$ meet at a dihedral angle $\psi$. Then
\eqref{eq:diag} still holds with $\Delta_1^e\Delta_2^e$ replaced by
$|\Delta_1^e|\,|\Delta_2^e|\cos\psi$, and $\mathrm{CM}=288V^2$, where $V$
is the volume of the tetrahedron $ABCD$. As $\psi$ runs from $0$ to
$\pi$, $f^2$ moves between the two values in \eqref{eq:rad}. The flat
positions $\psi=0$ and $\psi=\pi$ are exactly those where the volume
vanishes, and there $\cos\psi=\pm1$ is the sign of
$\Delta_1^e\Delta_2^e$.
\end{remark}

\subsection{Both diagonals together}

Apply \eqref{eq:diag} to the diagonal $BD$ as well, by the cyclic relabelling
$A\to B\to C\to D\to A$. The sides become $b,c,d,a$, the diagonals exchange
roles, and the two triangles on $BD$ are
\[
\Delta_1^f=[BCD],\qquad \Delta_2^f=[BAD].
\]
Multiply the two instances of \eqref{eq:diag} by $e^2$ and $f^2$
respectively and subtract. Since
$(a^2-b^2)(c^2-d^2)-(b^2-c^2)(d^2-a^2)=(a^2-c^2)(b^2-d^2)$, we obtain, for
every quadrilateral with $A\ne C$ and $B\ne D$,
\begin{equation}\label{eq:gen}
(a^2-c^2)(b^2-d^2)-16\bigl(\Delta_1^e\Delta_2^e-\Delta_1^f\Delta_2^f\bigr)
=(e^2-f^2)\bigl(e^2+f^2-a^2-b^2-c^2-d^2\bigr).
\end{equation}

Both sides of \eqref{eq:gen} have a direct geometric meaning. For the area
term on the left, write
\[
p=[ABC],\quad q=[ACD],\quad r=[BCD],\quad s=[BDA],
\]
so that $\Delta_1^e=p$, $\Delta_2^e=-q$, $\Delta_1^f=r$ and
$\Delta_2^f=-s$. Each diagonal cuts the polygon $ABCD$ into two triangles,
so $p+q=r+s=[ABCD]$; call this common value $K$. Substituting $q=K-p$ and
$s=K-r$,
\[
\Delta_1^e\Delta_2^e-\Delta_1^f\Delta_2^f=rs-pq=(r-p)(K-r-p)=-(r-p)(r-q).
\]
Each factor is a single determinant:
\begin{align*}
r-p&=[BCD]-[BCA]=\tfrac12\det(C-B,\,D-A),\\
r-q&=[CDB]-[CDA]=\tfrac12\det(D-C,\,B-A).
\end{align*}
Hence
\begin{equation}\label{eq:area}
16\bigl(\Delta_1^e\Delta_2^e-\Delta_1^f\Delta_2^f\bigr)
=-4\det(C-B,\,D-A)\,\det(D-C,\,B-A).
\end{equation}
The first determinant vanishes exactly when $BC\parallel AD$, and the second
exactly when $CD\parallel AB$. This gives a case-free criterion for a pair of
parallel sides.

\begin{proposition}\label{prop:trap}
For every quadrilateral $ABCD$ with $A\ne C$ and $B\ne D$,
$\Delta_1^e\Delta_2^e=\Delta_1^f\Delta_2^f$ if and only if $ABCD$ has a pair
of parallel opposite sides.
\end{proposition}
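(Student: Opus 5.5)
The plan is to read the proposition directly off identity \eqref{eq:area}, which was derived just above for every quadrilateral, using only the shoelace formula and the decomposition $p+q=r+s=[ABCD]$. Identity \eqref{eq:area} says
\[
16\bigl(\Delta_1^e\Delta_2^e-\Delta_1^f\Delta_2^f\bigr)
=-4\det(C-B,\,D-A)\,\det(D-C,\,B-A).
\]
So $\Delta_1^e\Delta_2^e=\Delta_1^f\Delta_2^f$ holds if and only if the product of the two determinants vanishes. That in turn holds if and only if at least one of them is zero.

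The remaining step is to interpret each determinant. The vector $C-B$ spans the side $BC$, and $D-A$ spans the side $AD$. The determinant $\det(C-B,\,D-A)$ is zero exactly when these two vectors are linearly dependent, that is, when $BC\parallel AD$. Likewise, $\det(D-C,\,B-A)=0$ exactly when $CD\parallel AB$. These are the two pairs of opposite sides, so the vanishing of the product is precisely the statement that $ABCD$ has a pair of parallel opposite sides. No case distinction between convex, concave and crossed quadrilaterals arises, because \eqref{eq:area} is a polynomial identity valid for all positions.

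The only point needing care, and the main (mild) obstacle, is degenerate sides. If, say, $B=C$, then $C-B=0$, the determinant vanishes, and "parallel" must be read with the convention that a zero vector is parallel to every line. I would state this convention explicitly, saying that two vectors are parallel when their determinant vanishes, rather than exclude such configurations. The hypotheses $A\ne C$ and $B\ne D$ are needed only so that $\Delta^e$ and $\Delta^f$ refer to genuine diagonals, as in \eqref{eq:gen}. With that convention the equivalence is immediate from \eqref{eq:area}.
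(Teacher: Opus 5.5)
Your proposal is correct and is essentially the paper's argument: the proposition is read off directly from identity \eqref{eq:area}, since $\det(C-B,\,D-A)$ vanishes exactly when $BC\parallel AD$ and $\det(D-C,\,B-A)$ exactly when $CD\parallel AB$. Your explicit convention for zero-length sides is a harmless clarification that the paper leaves implicit, in the same spirit as its remark after Theorem~\ref{thm:equi} that ``parallel'' includes the case where the two sides lie on one line.
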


Since the sums $p+q$ and $r+s$ are always equal, the equal products mean
that $\{p,q\}=\{r,s\}$ as unordered pairs: either $p=r$ and $q=s$, which
holds exactly when $BC\parallel AD$, or $p=s$ and $q=r$, which holds exactly
when $AB\parallel CD$.

For example, in a parallelogram each diagonal cuts the area $K$ into two
halves, so both products equal $-K^2/4$, although the diagonals are equal
only for a rectangle. The products detect parallel sides, not equal
diagonals.

For the right side, let $m$ be the distance between the midpoints of the
diagonals. Then $2m=|(A+C)-(B+D)|$, and expanding $|A+C-B-D|^2$ gives
Euler's quadrilateral relation $a^2+b^2+c^2+d^2=e^2+f^2+4m^2$. Combining
this with \eqref{eq:gen} and \eqref{eq:area} gives the following.

\begin{proposition}\label{prop:both}
For every quadrilateral $ABCD$ with $A\ne C$ and $B\ne D$,
\begin{equation}\label{eq:full}
(a^2-c^2)(b^2-d^2)+4\det(C-B,\,D-A)\,\det(D-C,\,B-A)=-4m^2(e^2-f^2),
\end{equation}
where $m$ is the distance between the midpoints of the diagonals.
\end{proposition}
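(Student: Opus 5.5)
The plan is to take identity \eqref{eq:gen} as given and rewrite each side with the two auxiliary facts just established. Relation \eqref{eq:gen} holds for every quadrilateral with $A\ne C$ and $B\ne D$, because it was obtained by applying Theorem~\ref{thm:diag} to both diagonals. The left side contains $-16(\Delta_1^e\Delta_2^e-\Delta_1^f\Delta_2^f)$. By \eqref{eq:area} this equals $+4\det(C-B,\,D-A)\det(D-C,\,B-A)$, so the left side of \eqref{eq:gen} becomes exactly the left side of \eqref{eq:full}. I will double-check the sign chain here. First, $rs-pq=-(r-p)(r-q)$: substitute $q=K-p$ and $s=K-r$; then $rs-pq=r(K-r)-p(K-p)=(r-p)(K-r-p)$, and $K-p=q$ gives $K-r-p=q-r$. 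Second, confirm the two determinant expressions for $r-p$ and $r-q$ by writing each oriented triangle area as $\tfrac12\det$ of edge vectors from a common vertex.

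For the right side I will invoke Euler's relation $a^2+b^2+c^2+d^2=e^2+f^2+4m^2$. It follows by expanding
\[
|(A-B)+(C-D)|^2=|A+C-B-D|^2
\]
and comparing with the sum of the squared sides minus the squared diagonals. In vector form this is the parallelogram-law identity
\[
|A-B|^2+|B-C|^2+|C-D|^2+|D-A|^2-|A-C|^2-|B-D|^2=|A+C-B-D|^2,
\]
which I check by direct expansion of dot products. It gives $e^2+f^2-a^2-b^2-c^2-d^2=-4m^2$. The right side of \eqref{eq:gen} is therefore $(e^2-f^2)(-4m^2)$, which is the right side of \eqref{eq:full}.

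The only real obstacle is sign bookkeeping: the relations among $p,q,r,s$ and the $\Delta$'s, namely $\Delta_2^e=[ADC]=-[ACD]=-q$ and $\Delta_2^f=[BAD]=-[BDA]=-s$, and the orientation of each determinant factor. I will confirm them on the unit square as a sanity check. There the left side of \eqref{eq:full} is $0+4\det(C-B,D-A)\det(D-C,B-A)$, and both determinants vanish because both pairs of opposite sides are parallel. The right side is $0$ because $e=f$, so the two sides agree. Since no step divides by anything beyond what \eqref{eq:gen} already requires, the hypotheses $A\ne C$ and $B\ne D$ suffice, and every step is a polynomial identity. This means no configuration cases arise.
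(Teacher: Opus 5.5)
Your proposal is correct and is the paper's argument: substitute \eqref{eq:area} into the area term of \eqref{eq:gen}, and rewrite $e^2+f^2-a^2-b^2-c^2-d^2$ as $-4m^2$ using Euler's relation, which comes from expanding $|A+C-B-D|^2$. One small remark: your unit-square check cannot catch sign errors, since every term vanishes there; the signs are settled by your direct verification of $rs-pq=-(r-p)(r-q)$ and of the two determinant expressions.
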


\section{Equidiagonal quadrilaterals}

Setting $e=f$ in Proposition~\ref{prop:both}, the right side of
\eqref{eq:full} vanishes, and we obtain the following.

\begin{theorem}\label{thm:equi}
Let $ABCD$ be a quadrilateral, convex, concave or crossed, with equal
diagonals $AC=BD$. Then
\[
(a^2-c^2)(b^2-d^2)=-4\det(C-B,\,D-A)\,\det(D-C,\,B-A).
\]
Consequently, $ABCD$ has a pair of equal opposite sides ($a=c$ or $b=d$) if
and only if it has a pair of parallel opposite sides ($AB\parallel CD$ or
$BC\parallel AD$).
\end{theorem}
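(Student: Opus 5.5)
The plan is to read the identity off Proposition~\ref{prop:both} and then argue the equivalence from the signs of the two factors on each side.

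For the displayed identity, set $e=f$ in \eqref{eq:full}. The right side $-4m^2(e^2-f^2)$ vanishes, and moving the determinant term across gives
\[
(a^2-c^2)(b^2-d^2)=-4\det(C-B,\,D-A)\,\det(D-C,\,B-A).
\]
Proposition~\ref{prop:both} needs $A\ne C$ and $B\ne D$. If instead $e=f=0$, then $A=C$ and $B=D$, so $a=b=c=d$ and both determinants are $\det(A-B,\,B-A)=0$; the identity holds trivially. Alternatively, both sides are polynomials in the coordinates, and the identity extends by the same continuity remark used in the proof of Theorem~\ref{thm:diag}.

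For the equivalence, first note what each factor vanishing means:
\begin{itemize}
\item $a=c$ means $a^2-c^2=0$, and $b=d$ means $b^2-d^2=0$, since lengths are nonnegative;
\item $\det(C-B,\,D-A)=0$ means $BC\parallel AD$;
\item $\det(D-C,\,B-A)=0$ means $CD\parallel AB$.
\end{itemize}
So $ABCD$ has a pair of equal opposite sides exactly when the left side vanishes. It has a pair of parallel opposite sides exactly when the right side vanishes. Since the two sides are equal, one vanishes if and only if the other does, and that is the claim.

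The only delicate point is degenerate cases, where a "side" has length zero (for example $A=B$). There, parallelism with a zero vector has to be read as automatic, consistent with the determinant vanishing. I would state this convention explicitly rather than treat it as an obstacle. The substantive work was already done in Proposition~\ref{prop:both}, so I expect no real obstacle beyond making sure the hypotheses $A\ne C$ and $B\ne D$ hold or are dispensed with as above.
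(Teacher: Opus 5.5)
Your proposal is correct and is the paper's argument: set $e=f$ in \eqref{eq:full} so that the right side $-4m^2(e^2-f^2)$ vanishes, then read off the equivalence from which factor of each product vanishes, using the paper's remark after \eqref{eq:area} that the two determinants vanish exactly when $BC\parallel AD$ and when $CD\parallel AB$. Your separate treatment of the degenerate case $A=C$, $B=D$ goes slightly beyond what the paper writes; one small slip there is that the second determinant is $\det(B-A,\,B-A)$ rather than $\det(A-B,\,B-A)$, though it is zero either way.
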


Here ``parallel'' includes the degenerate case in which the two sides lie on
one line. For a convex quadrilateral the theorem contains the familiar fact
that a trapezoid with equal diagonals is isosceles, together with its
converse; the proof above makes no use of convexity.

\section{Conics through the four vertices}

\subsection{Background}

Every triangle has a circumcircle, and any five points lie on a conic. Four
points with no three collinear lie on infinitely many conics, so the
natural question is which kinds of conic occur. It is well known that the
four points lie on an ellipse exactly when they are in convex position; Li,
Wang, Shen and Dai~\cite{Li} give a proof of existence by parallel
projection. The ellipses through the vertices of a convex quadrilateral
have a long history. Steiner characterised the most nearly circular one
(see D\"orrie~\cite[Problem~54]{Dorrie}); Horwitz~\cite{Horwitz2010}
proved that there are unique ellipses of minimal area and of minimal
eccentricity through the four vertices, and his book~\cite{HorwitzBook}
collects these and related results; and Silvester~\cite{Silvester}
extended to convex quadrilaterals, in the \emph{Mathematical Gazette}, the classical
results on ellipses of extremal area associated with a triangle.

Our contribution is to show that the type of conic is decided by a single
number $k$, a ratio of products of directed distances, and that the same
number detects when the four points lie on a circle. We begin with an
example.

\subsection{An example}

Take the diagonals along the coordinate axes, meeting at the origin $O$:
\[
A=(-2,0),\quad C=(1,0),\quad B=(0,1),\quad D=(0,-3).
\]
We look for a conic through the four points with no $xy$ term,
\[
x^2+ky^2+px+qy+r=0 .
\]
On the $x$-axis it reduces to $x^2+px+r=0$, which must have the roots $-2$
and $1$; so $r=(-2)(1)=-2$ and $p=1$. On the $y$-axis it reduces to
$ky^2+qy+r=0$, which must have the roots $1$ and $-3$; so
$r=k\cdot(1)(-3)$, giving $k=\tfrac23$, and $q=\tfrac43$. Clearing
fractions, the conic is
\[
3x^2+2y^2+3x+4y-6=0,
\]
and one checks at once that all four points satisfy it. The coefficients
of $x^2$ and $y^2$ have the same sign, so after completing the squares,
$3(x+\tfrac12)^2+2(y+1)^2=\tfrac{35}{4}$, it is an ellipse.

Now move $B$ to $B'=(0,-1)$, inside the triangle $ACD$, so that $AB'CD$ is
concave. The roots on the $y$-axis become $-1$ and $-3$, so $r=-2=3k$ and
$k=-\tfrac23$. The conic is now
\[
3x^2-2y^2+3x-8y-6=0,\qquad\text{that is,}\qquad
2(y+2)^2-3(x+\tfrac12)^2=\tfrac54,
\]
a hyperbola (Figure~\ref{fig:conics}).

\begin{figure}[htbp]
\centering
\begin{tikzpicture}[scale=0.62]
  \draw[->,gray] (-3.3,0)--(2.3,0); \draw[->,gray] (0,-3.9)--(0,1.9);
  \draw[thick] (-0.5,-1) ellipse ({sqrt(35/12)} and {sqrt(35/8)});
  \draw[dashed] (-2,0)--(0,1)--(1,0)--(0,-3)--cycle;
  \foreach \P/\x/\y/\pos in {A/-2/0/above left,C/1/0/above right,B/0/1/above right,D/0/-3/below right}
    \node[circle,fill,inner sep=1.2pt,label=\pos:$\P$] at (\x,\y) {};
  \node[below left] at (0,0) {\scriptsize $O$};
  \node at (-0.4,-4.7) {(a) $k=\tfrac23$: ellipse};
\end{tikzpicture}\qquad\qquad
\begin{tikzpicture}[scale=0.62]
  \draw[->,gray] (-3.3,0)--(2.3,0); \draw[->,gray] (0,-3.9)--(0,1.9);
  \draw[thick,domain=-3.1:2.1,samples=60] plot (\x,{-2+sqrt((1.25+3*(\x+0.5)^2)/2)});
  \draw[thick,domain=-1.9:0.9,samples=60] plot (\x,{-2-sqrt((1.25+3*(\x+0.5)^2)/2)});
  \draw[dashed] (-2,0)--(0,-1)--(1,0)--(0,-3)--cycle;
  \foreach \P/\x/\y/\pos in {A/-2/0/below left,C/1/0/below right,B'/0/-1/below right,D/0/-3/left}
    \node[circle,fill,inner sep=1.2pt,label=\pos:$\P$] at (\x,\y) {};
  \node[above left] at (0,0) {\scriptsize $O$};
  \node at (-0.4,-4.7) {(b) $k=-\tfrac23$: hyperbola};
\end{tikzpicture}
\caption{The conic with no $xy$ term through the vertices of (a) a convex
and (b) a concave quadrilateral whose diagonals lie along the axes.}
\label{fig:conics}
\end{figure}

The example contains the whole idea. The constant $r$ is the product of the
two roots on each axis, so
\[
r=\overline{OA}\cdot\overline{OC}=k\cdot\overline{OB}\cdot\overline{OD},
\qquad\text{that is,}\qquad
k=\frac{\overline{OA}\cdot\overline{OC}}{\overline{OB}\cdot\overline{OD}} .
\]
By \S2.3, each product of directed lengths is negative exactly when $O$
lies between the two points. In the convex case $O$ lies inside both
diagonals, both products are negative, and $k>0$; in the concave case $O$
lies inside one diagonal and outside the other, and $k<0$. The sign of $k$
then decides whether $x^2$ and $y^2$ have coefficients of the same or
opposite sign.

\subsection{The ratio $k$ in general}

For a general quadrilateral the diagonals need not be perpendicular, and
they may even be parallel. We therefore define $k$ by directed distances,
which make sense in every case. For a quadrilateral $ABCD$ with no three
vertices collinear, put
\begin{equation}\label{eq:k}
k=\frac{h_A(BD)\,h_C(BD)}{h_B(AC)\,h_D(AC)}
=\frac{e^2\,\Delta_1^f\Delta_2^f}{f^2\,\Delta_1^e\Delta_2^e} .
\end{equation}
Thus $k$ compares, with signs, how far the vertices off each diagonal lie
from it. The second form follows from $[PQX]=\tfrac12\,PQ\cdot h_X(PQ)$
(\S2.3): each triangle on $AC$ has base $e$ and each triangle on $BD$ has
base $f$, so
\[
\Delta_1^e\Delta_2^e=\tfrac14e^2\,h_B(AC)\,h_D(AC),\qquad
\Delta_1^f\Delta_2^f=\tfrac14f^2\,h_A(BD)\,h_C(BD).
\]

When the diagonals meet at a point $O$, the example's formula returns. Let
$\varphi$ be the angle between the diagonals. A point $X$ on the line $AC$
has directed distance $\overline{OX}\,\sin\varphi$ from the line $BD$, up to
a sign that is the same for every such $X$; and similarly for points on
$BD$. In each product of two distances the sign and the factor
$\sin^2\varphi$ are common to numerator and denominator, and cancel:
\begin{equation}\label{eq:kchord}
k=\frac{\overline{OA}\cdot\overline{OC}\,\sin^2\varphi}
{\overline{OB}\cdot\overline{OD}\,\sin^2\varphi}
=\frac{\overline{OA}\cdot\overline{OC}}{\overline{OB}\cdot\overline{OD}} .
\end{equation}
So $k$ is a ratio of directed lengths along the diagonals, a ratio of
directed distances from the diagonals, and a ratio of oriented areas, all
at once.

\subsection{The theorem}

\begin{theorem}\label{thm:conic}
Let $ABCD$ be a quadrilateral with no three vertices collinear. Then
\begin{enumerate}
\item[(i)] $k>0$ if and only if $A,B,C,D$ lie on an ellipse; this is the
  case for convex and crossed quadrilaterals;
\item[(ii)] $k<0$ if and only if every conic through $A,B,C,D$, other than
  a pair of lines, is a hyperbola; this is the case for concave
  quadrilaterals;
\item[(iii)] $A,B,C,D$ lie on a circle if and only if $k=1$ when the
  diagonals $AC$ and $BD$ are not parallel, and if and only if $BC=DA$ when
  they are.
\end{enumerate}
\end{theorem}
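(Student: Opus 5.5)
We will reduce everything to the computation in the example, by working in coordinates adapted to the two diagonals. Throughout, "ellipse" and "hyperbola" are affine notions, and an invertible affine map preserves them. The argument splits into two cases: the diagonals $AC$ and $BD$ meet at a point $O$, or they are parallel.

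\emph{Intersecting diagonals.} Take oblique affine coordinates with origin $O$, the $x$-axis along $AC$ and the $y$-axis along $BD$. Let the unit on each axis be Euclidean length along that line, so that the coordinate of $X$ on an axis is $\overline{OX}$. The computation of the example then applies verbatim. There is a conic
\[
Q_0:\ x^2+ky^2+px+qy+r=0
\]
through $A,B,C,D$, with $r=\overline{OA}\cdot\overline{OC}=k\,\overline{OB}\cdot\overline{OD}$. By \eqref{eq:kchord}, this $k$ is exactly the $k$ of \eqref{eq:k}. Since no three vertices are collinear, every conic through the four points lies in the pencil $\mu Q_0+\lambda xy=0$. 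The member $\mu=0$ is the line pair $AC\cup BD$. For $\mu=1$ the quadratic part is $x^2+txy+ky^2$, with discriminant $t^2-4k$; since the sign of a discriminant is invariant under a linear change of variables, the oblique axes do not affect it.
\begin{itemize}
\item If $k>0$, the member $t=0$ has definite quadratic part. It is not degenerate, because it contains four points with no three collinear. It is not empty, because it contains real points. Hence it is a real ellipse.
\item If $k<0$, then $t^2-4k>0$ for every $t$, so each non-degenerate member is a hyperbola.
\end{itemize}
These two cases are exhaustive and exclusive, because $k\neq0$ when no three vertices are collinear. This gives both ``if and only if'' statements in (i) and (ii).

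For the identification with convex, crossed and concave quadrilaterals, use \S2.3: $\overline{OA}\cdot\overline{OC}<0$ exactly when $O$ lies inside the segment $AC$, and similarly for $BD$. In a convex quadrilateral $O$ lies inside both diagonals, so $k>0$. In a crossed quadrilateral it lies inside neither, so again $k>0$. In a concave quadrilateral it lies inside exactly one, so $k<0$. For (iii), $k=1$ means $\overline{OA}\cdot\overline{OC}=\overline{OB}\cdot\overline{OD}$. The power of a point and its converse, both stated in \S2.3, show that this holds if and only if the points are concyclic. The converse applies because the two lines are distinct and no vertex equals $O$.

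\emph{Parallel diagonals.} Here $A$ and $C$ lie on the same side of the line $BD$, and $B$ and $D$ lie on the same side of $AC$. Both height products in \eqref{eq:k} are therefore positive, so $k>0$. Also $O$ does not exist, and the quadrilateral is either a convex trapezoid or a crossed one. For the ellipse, choose an isosceles trapezoid $A'B'C'D'$ with $A'C'\parallel B'D'$, the same ratio $AC:BD$, and the same relative orientation of the two parallel segments. The affine map sending $A,C,B$ to $A',C',B'$ preserves ratios of lengths along parallel lines, so it sends $D$ to $D'$. An isosceles trapezoid is cyclic, so the inverse image of its circumcircle is an ellipse through $A,B,C,D$. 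To confirm (ii) cannot occur in this case, note that the existence of this ellipse already excludes it.

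For (iii) in this case, the points are concyclic if and only if the perpendicular bisectors of the parallel chords $AC$ and $BD$ coincide. Coincident bisectors make the configuration symmetric in that line, which forces $BC=DA$. Conversely, suppose $BC=DA$ with the bisectors distinct. Then the equal segments $BC$ and $DA$ join the two parallel lines and have equal inclination, and the alternative to the symmetric configuration is $C-B=A-D$.

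I expect the main obstacle to be this last step: showing that $C-B=A-D$ (or the other non-symmetric solution of $BC=DA$) cannot occur given the vertex order. Such a solution would make $ABCD$, or $ADCB$, a parallelogram with $AC\parallel BD$, hence degenerate. I would check this with one explicit computation, putting $A=(0,0)$, $C=(c,0)$, $B=(u,h)$, $D=(v,h)$. In these coordinates $BC=DA$ reads $(u-c)^2=v^2$, with the two solutions $u+v=c$ and $u-v=c$. The first is exactly the symmetric case. For the second, I would verify whether it forces three collinear vertices; that check is the point on which the converse direction of (iii) depends.
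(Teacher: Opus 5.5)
Your arguments for (i), (ii), and for (iii) with intersecting diagonals are sound, and they take a different route from the paper's.

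\begin{itemize}
\item The paper rules out an ellipse for a concave quadrilateral by convexity: an ellipse through $A,B,C$ encloses the triangle, so the fourth vertex lies strictly inside. It builds an ellipse for a convex quadrilateral by reordering a crossed one into convex position, shearing the diagonals perpendicular, and reusing the example.
\item You use oblique coordinates and the pencil $\mu Q_0+\lambda xy$. This gets existence and non-existence from the single discriminant $t^2-4k$, describes every conic through the four points, and handles crossed quadrilaterals directly. The paper's route is more geometric.
\end{itemize}

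One small correction: when $AC\parallel BD$, the quadrilateral $ABCD$ is always crossed. The convex trapezoid is the same four points taken in a different order.

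The step you flag is a genuine gap, and it cannot be closed, because the ``if'' direction of (iii) for parallel diagonals is false as stated.

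\begin{itemize}
\item Your second solution $u-v=c$ is $C-B=A-D$, that is, $A+B=C+D$. So the parallelogram is $ACBD$, with $AC$ and $BD$ as opposite sides. Nothing degenerates, and no three vertices become collinear.
\item Concretely, take $A=(0,0)$, $B=(3,1)$, $C=(2,0)$, $D=(1,1)$. Then $AC\parallel BD$, no three vertices are collinear, and $BC=DA=\sqrt2$. But the circle through $A,C,D$ has centre $(1,0)$ and radius $1$, while $B$ is at distance $\sqrt5$ from $(1,0)$.
\item Your ``only if'' argument is correct. The reflection in the common bisector actually gives $AB=CD$ as well as $BC=DA$. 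Since $AB=CD$ reads $u+v=c$ or $v-u=c$, the two equalities together force $u+v=c$.
\item So the correct criterion in the parallel case is $AB=CD$ and $BC=DA$. Equivalently, the two crossing sides of $ABCD$ are equal.
\end{itemize}

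The paper's proof has the same flaw. It asserts that the trapezoid with parallel sides $AC$, $BD$ has diagonals $AD$ and $BC$. That holds only when $C-A$ and $D-B$ point the same way. When they point opposite ways, as in the example above, the trapezoid's diagonals are $AB$ and $CD$. Then $BC$ and $DA$ are its legs, and legs can be equal in a parallelogram.
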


\begin{proof}
\emph{The sign of $k$.} If the diagonals meet at $O$, \eqref{eq:kchord} and
\S2.3 show, as in the example, that $k>0$ when $O$ lies inside both
diagonals (convex) or outside both (crossed), and $k<0$ when it lies inside
one and outside the other (concave). If $AC\parallel BD$, both products of
distances in \eqref{eq:k} equal the square of the distance between the two
lines, so $k=1>0$. In this case $ABCD$ is necessarily crossed, since the
diagonals of a convex or concave quadrilateral lie on intersecting lines;
but its vertices lie two on each of two parallel lines, so taken in another
order they form a trapezoid, and the four points are in convex position. In oriented
areas, the sign of $k$ is the sign of
$\Delta_1^e\Delta_2^e\cdot\Delta_1^f\Delta_2^f$, positive exactly when the
two diagonals give the same type of gluing (\S4.1).

\emph{No ellipse for a concave quadrilateral.} Say $D$ lies inside the
triangle $ABC$. An ellipse bounds a convex region; if it passes through
$A$, $B$, $C$, that region contains the whole triangle, so $D$ lies strictly
inside the ellipse and not on it. The same holds for a parabola. So the only
conics through the four points, apart from pairs of lines, are hyperbolas.

\emph{An ellipse for a convex quadrilateral.} The four points of a crossed
quadrilateral are those of a convex one, taken in a different order, so we
may assume $ABCD$ convex; its diagonals meet at a point $O$ inside both.
If they are perpendicular, take them as axes and repeat the example: the
conic $x^2+ky^2+px+qy+r=0$ with $r=\overline{OA}\cdot\overline{OC}$ and
$k=(\overline{OA}\cdot\overline{OC})/(\overline{OB}\cdot\overline{OD})>0$
passes through the four points, and since $k>0$ it is an ellipse. If they
are not perpendicular, first apply a shear that fixes the line $AC$ and
turns the line $BD$ until it is perpendicular to $AC$. A shear maps lines
to lines and preserves the order of points on a line, so the image is again
a convex quadrilateral and has an ellipse through its vertices; and the
inverse shear maps that ellipse to an ellipse through $A,B,C,D$.

\emph{Circles.} When the diagonals meet at $O$, the power of the point $O$
and its converse (\S2.3) say that $A,B,C,D$ are concyclic exactly when
$\overline{OA}\cdot\overline{OC}=\overline{OB}\cdot\overline{OD}$, that is,
by \eqref{eq:kchord}, exactly when $k=1$. In the example this is visible
directly: when the diagonals are perpendicular and $k=1$, the conic
$x^2+y^2+px+qy+r=0$ is a circle. If $AC\parallel BD$, the four points form a
trapezoid with parallel sides $AC$ and $BD$ and diagonals $AD$ and $BC$; it
is cyclic exactly when it is isosceles, that is, by Section~5, exactly when
its diagonals are equal: $BC=DA$.
\end{proof}

Part (iii) is a criterion for concyclicity that, like the other results in
this paper, needs no distinction between convex, concave and crossed
quadrilaterals. When the diagonals are not parallel, $ABCD$ is cyclic
exactly when $e^2\,\Delta_1^f\Delta_2^f=f^2\,\Delta_1^e\Delta_2^e$, or,
equivalently, when the product of the directed distances from $A$ and $C$
to the line $BD$ equals the product of the directed distances from $B$ and
$D$ to the line $AC$. When $AC\parallel BD$ the ratio $k$ is always $1$ and
gives no information; that is exactly the trapezoid case, and it is
settled by the equidiagonal theorem of Section~5.

\subsection{The size of $k$}

Every positive value of $k$ occurs: if $O$ is the midpoint of both
diagonals, with $OA=OC=1$ and $OB=OD=t$, then $k=1/t^2$. Relabelling
$A\to B\to C\to D\to A$ exchanges the diagonals and replaces $k$ by $1/k$,
so the cyclic value $k=1$ sits in the middle. At the extremes, $k\to\infty$
when $B$ or $D$ approaches the line $AC$, that is, when the quadrilateral
flattens towards a triangle with a vertex on the line of a diagonal. In the
example, letting $B=(0,\varepsilon)$ with $\varepsilon\to0$ gives
$k=2/(3\varepsilon)\to\infty$: the ellipse stretches without bound along the
$x$-axis while its height stays bounded, and in the limit it becomes the
pair of parallel lines $y=0$ and $y=-3$, the line through the three
collinear points and the parallel line through the fourth. Symmetrically,
$k\to0$ when $A$ or $C$ approaches the line $BD$.

For readers who know conjugate diameters: the ellipse we have constructed
has no $xy$ term in coordinates along the diagonals, so the directions of
$AC$ and $BD$ are conjugate directions of it, and it is the only conic
through $A,B,C,D$ with this property. Completing the squares shows that $k$
is the square of the ratio of its semi-diameters parallel to $AC$ and to
$BD$; the cyclic case $k=1$ is the one in which these two conjugate
semi-diameters are equal.

\section{Conclusion}

In each example the signs that ordinary angles, unsigned areas and
unsigned distances discard are carried by signed quantities, and one
argument covers convex, concave and crossed quadrilaterals alike. Oriented area gives
the diagonal area formula in one line and explains Harries's crossed case.
Directed angles let the classical convex proof of the diagonal identity
stand unchanged for every configuration. Applied to both diagonals, that
identity yields a relation valid for every quadrilateral, and from it a
case-free theorem on equidiagonal quadrilaterals. Finally, directed
distances turn the same oriented areas into a single ratio $k$ that decides
which conics pass through the four vertices, and in particular gives a
case-free criterion for a quadrilateral to be cyclic.
The case-by-case
treatments remain useful when one wants to see each configuration on its
own, but they are no longer necessary.

\section*{Use of artificial intelligence}

The author used Claude (Anthropic), an AI assistant, to help review and
edit drafts, check calculations numerically, prepare the figures and
\LaTeX{} source, and search the literature. The author reviewed all of its
contributions and takes full responsibility for the content of the paper.


{\small
\begin{thebibliography}{99}
\bibitem{Harries} J. Harries, Note 86.50: Area of a quadrilateral,
\emph{Math. Gaz.} \textbf{86} (July 2002) pp.~310--311.
\bibitem{Johnson} R. A. Johnson, Directed angles in elementary geometry,
\emph{Amer. Math. Monthly} \textbf{24} (1917) pp.~101--105.
\bibitem{SM} H. Sedrakyan and A. Mozayeni, Formulas for diagonals of any
quadrilateral, \emph{Mathematical Reflections} \textbf{4} (2022).
\bibitem{Blumenthal} L. M. Blumenthal, \emph{Theory and applications of
distance geometry}, Clarendon Press, Oxford (1953).
\bibitem{Li} J. H. Li, Z. Q. Wang, Y. X. Shen and Z. Y. Dai, Does any
convex quadrilateral have circumscribed ellipses?, \emph{Open Math.}
\textbf{15} (2017) pp.~1463--1476.
\bibitem{Dorrie} H. D\"orrie, \emph{100 great problems of elementary
mathematics}, trans. D. Antin, Dover, New York (1965).
\bibitem{Horwitz2010} A. Horwitz, Ellipses of minimal area and of minimal
eccentricity circumscribed about a convex quadrilateral, \emph{Aust. J.
Math. Anal. Appl.} \textbf{7} (2010) Article~8.
\bibitem{HorwitzBook} A. Horwitz, \emph{Ellipses inscribed in, and
circumscribed about, quadrilaterals}, Chapman \& Hall/CRC (2024).
\bibitem{Silvester} J. R. Silvester, Extremal area ellipses of a convex
quadrilateral, \emph{Math. Gaz.} \textbf{101} (March 2017) pp.~11--26.
\end{thebibliography}
}
\end{document}